\newcommand\numberthis{\addtocounter{equation}{1}\tag{\theequation}}
\newtheorem{thm}{Theorem}[section]
\newtheorem{lem}[thm]{Lemma}
\newtheorem{prop}[thm]{Proposition}
\newcommand{\BigO}[1]{\ensuremath{\operatorname{O}\left(#1\right)}}
\renewcommand{\geq}{\geqslant}
\renewcommand{\leq}{\leqslant}
\newcommand{\BigOC}[1]{\ensuremath{\operatorname{O}_{\mathcal{C}_{\Lambda}}\left(#1\right)}}
\numberwithin{equation}{section}
\begin{document}

\baselineskip=17pt


\title{Pair correlation of Farey fractions with square-free denominators}

\author{Bittu Chahal}
\email{bittui@iiitd.ac.in}

\author{Sneha Chaubey}
\email{sneha@iiitd.ac.in}

\address[]{Department of Mathematics, IIIT Delhi, New Delhi 110020}

\begin{abstract}
In this article, we study the pair correlation of Farey fractions by proving that the limiting pair correlation function of the sequence of Farey fractions with square-free denominators exists and provide an explicit formula for the limiting pair correlation function.
\end{abstract}

 \subjclass[2020]{ 11B57, 11J71}
 
\keywords{Farey fractions, Pair correlation, square-free numbers.}

\maketitle

\section{Introduction and main results}
The Farey sequence $\mathcal{F}_Q$ of order $Q$ is an ascending sequence of fractions $a/b$ in the unit interval $(0,1]$ such that $\gcd (a,b)=1$ and $0<a\leq b\leq Q.$ The Farey sequence plays a vital role in mathematics and is of independent interest for many mathematicians. It is well known that the Farey fractions in $\mathcal{F}_Q$  are nicely distributed in $[0,1]$ as $Q\to\infty.$ The primary interest in the distribution of the Farey fractions is due to the classical work of Franel \cite{Franel} and Landau \cite{Landau}, who showed that the Riemann hypothesis and quantitative statements about the uniform distribution of Farey fractions are known to be equivalent.

In particular, there is no general best way to measure the distribution of a sequence, but two ways are broadly accepted; one is to study the $h$-th level spacing measure, and the other is to study the $m$-level correlation measure. In the $h$-th level spacing measure, one aims to study how the spacings between the consecutive elements of a sequence are distributed. Hall \cite{Hall} studied the first level spacing distribution of Farey fractions by estimating the moments of spacings of consecutive Farey fractions. Augustin et al. \cite{Augustin} 
 studied the $h$-th level spacing distribution of Farey fractions for $h\geq 2$ by showing the convergence of the sequence of probability measures. In this note, we are interested in the $2$-level or pair correlation measure of Farey fractions. 

The study of correlations of sequences was introduced by physicists to perceive the spectra of high energies. A significant consideration has been given to these notions in several areas of number theory, mathematical physics, and probability theory. In number theory, it has received overwhelming attention after the work of Montgomery \cite{Montgomery} and Hejhal \cite{Hejhal} on the correlations of zeros of the Riemann zeta function, and Rudnick and Sarnak \cite{Rudnick} on the correlations of zeros of $L$-functions.

Let $\mathcal{F}$ be a finite set of $\mathscr{N}$ elements in the unit interval $[0,1]$. The pair correlation measure $\mathcal{S}_{\mathcal{F}}(I)$ of an interval $I\subset \mathbb{R}$ is defined as

\[\frac{1}{\mathscr{N}}\#\left\{(a,b)\in \mathcal{F}^2: a\ne b,\ a-b\in \frac{1}{\mathscr{N}}I+\mathbb{Z} \right\}.\]

The limiting pair correlation measure of an increasing sequence $(\mathcal{F}_n)_n$, for every interval $I$, is given (if it exists) by
\[\mathcal{S}(I)=\lim_{n\to \infty }\mathcal{S}_{\mathcal{F}_n}(I).\]
If
\[\mathcal{S}(I)=\int_{I}g(x)dx, \numberthis\label{g(x)}\]
then $g$ is called the limiting pair correlation function of $(\mathcal{F}_n)_n$. The pair correlation is said to be Poissonian if $g(x)\equiv 1$.

Boca and Zaharescu \cite{Boca} studied the pair correlation of Farey fractions and proved that the limiting pair correlation function of $\mathcal{F}_Q$ is given by
\[g(\lambda)=\frac{6}{\pi^2\lambda^2}\sum_{1\leq k<\frac{\pi^2\lambda}{3}}\phi(k)\log\frac{\pi^2\lambda}{3k},\numberthis\label{g(lambda)}\]
and it shows a strong repulsion between the elements of the sequence. The pair correlation of Farey fractions with prime denominators was studied by Xiong and Zaharescu \cite{Xiong}, who showed that the pair correlation is Poissonian. A more general result on the pair correlation of fractions with prime denominators is contained in \cite{Xiao}. Also, Xiong and Zaharescu \cite{Zaharescu}  studied the pair correlation of Farey fractions with denominators coprime with $B_Q$, the monotonic increasing sequence of square-free numbers with the condition that $B_{Q_1}|B_{Q_2}$ if $Q_1< Q_2$. They proved that the pair correlation of the sequence is Poissonian if $\lim_{Q\to\infty}\frac{\phi(B_Q)}{B_Q}=0$ and showed a strong repulsion if $\lim_{Q\to\infty}\frac{\phi(B_Q)}{B_Q}\ne 0$, where $\phi$ is the Euler's totient function. Recently, the pair correlation of Farey fractions with denominators coprime to $m$ was investigated by Boca and Siskaki \cite{Siskaki}, and the pair correlation function is given by
\[g_{(m)}(\lambda)=\frac{\phi(m)}{m}\cdot\frac{C_m}{\lambda^2}\sum_{1\leq \Delta\leq\frac{2\lambda}{C_m}}\phi(\Delta)\frac{(\Delta,m)}{\phi((\Delta,m))}\log\frac{2\lambda}{C_m\Delta}, \numberthis\label{g(m)}\]
where, $C_m=\frac{\phi(m)}{\zeta(2)m}\prod_{p|m}\left(1-\frac{1}{p^2} \right)^{-1}$. They also studied the pair correlation of Farey fractions with denominators in an arithmetic progression. The pair correlation measure of torsion points on elliptic curves is studied by Alkan et al. \cite{Alkan}, and they showed a strong repulsion between the torsion points of elliptic curves. The repulsion happens due to a correspondence between the distribution of Farey fractions and the distribution of the torsion points of elliptic points. Alkan et al. \cite{EmreA} also computed the pair correlation measure of the sum $\mathcal{F}_Q+\mathcal{F}_Q$ modulo $1$, as $Q\to\infty$. 

In this article, we are interested in the pair correlation of Farey fractions with square-free denominators. It is interesting to see the strong repulsion near zero, even after restricting the denominators to square-free. Authors in \cite{Ledoan} studied the distribution of the index of Farey fractions by imposing a similar restriction on Farey denominators in a fixed arithmetic progression. Note that the Farey fractions of order $Q$ with prime denominators lie in the set of Farey fractions of order $Q$ with square-free denominators but do not lie in the set of Farey fractions with denominators coprime to $B_{Q}\ne 1$, so our sequence of Farey fractions with square-free denominators does not coincide with the sequence in \cite{Zaharescu}. A positive integer $n$ is said to be square-free if there does not exist a prime $p$ such that $p^2|n$. 
Square-free numbers are closely related to prime numbers but have a positive asymptotic density with a more even distribution. This means one can expect a somewhat less random behavior with the pair correlation function being non-Poissonian, whereas, for prime denominators, it is Poissonian, as proved in \cite{Xiong}. We prove this in Theorem \ref{main result} by explicitly computing the limiting pair correlation function.

Denote
\[\mathcal{F}_{Q,2}:=\left\{\frac{a}{q}: 0<a\leq q\leq Q, (a,q)=1, q\ \text{is square-free} \right\}. \numberthis\label{F(Q)}\]
Throughout the paper, $p$ and $p^{\prime}$ denote prime numbers, $\tau(n)$ is the number of positive divisors of $n$, $\mu$ is the  M\"{o}bius function, and $(a,b)=1$ denotes that $a$ and $b$ are coprime. 
\begin{thm}\label{main result}
The limiting pair correlation function of the sequence $(\mathcal{F}_{Q,2})_Q$ exists and is given by
 \[\mathfrak{g}_2(\lambda)=\frac{6}{\lambda^2\pi^2}\sum_{1\leq m<\frac{\lambda\pi^2}{3\delta(p)}}F(m)\log\frac{\lambda\pi^2}{3m\delta(p)}\ \numberthis\label{g}\]
for any $\lambda\geq 0,$ where
 $\delta(p)=\prod_{p}\left(1-\frac{1}{p(p+1)}\right),$
 and
 \begin{align*}
  F(m)=&m\sum_{d|m}\frac{\mu(d)\phi(d)}{d^2}\prod_{\substack{p\\(p,d)=1}}\left(1-\frac{1}{p(p+1)}\right)\left(1-\frac{1}{p^2+p-1}\right)\\
      &\times\prod_{\substack{p|m\\ (p,d)=1}}\left(1-\frac{(p-1)(p^2+p-2)}{p^3(p+1)}\prod_{\substack{p^{\prime}|m\\(p^{\prime},dp)=1}}\left(1+\frac{p^{\prime}-1}{p{^{\prime}}^2+p^{\prime}-2}\right)\right)\\
     &\times\prod_{\substack{p|m\\ p|d}}\left(1-\frac{1}{p+1}\prod_{\substack{p^{\prime}|m\\(p^{\prime},dp)=1}}\left(1+\frac{p^{\prime}-1}{p{^{\prime}}^2+p^{\prime}-2}\right)\right).\numberthis\label{F(m)}
      \end{align*}
\end{thm}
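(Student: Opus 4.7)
The plan is to adapt the Boca-Zaharescu area method \cite{Boca}, as extended by Boca and Siskaki \cite{Siskaki}, incorporating the square-free constraint $\mu^{2}(q)=\mu^{2}(q')=1$ on both denominators. The first step is the normalization: a standard computation with the Dirichlet series
\[
\sum_{n\geq 1}\mu^{2}(n)\phi(n)n^{-s}=\zeta(s-1)\prod_{p}\bigl(1-p^{-s}-(p^{2}-p)p^{-2s}\bigr),
\]
whose residue at $s=2$ simplifies to $(6/\pi^{2})\delta(p)$ via the identity $(1-2p^{-2}+p^{-3})/(1-p^{-2})=1-1/(p(p+1))$, yields $\mathscr{N}_{Q}:=|\mathcal{F}_{Q,2}|\sim (3\delta(p)/\pi^{2})Q^{2}$. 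This accounts for the constant $\delta(p)$ appearing inside the logarithm and in the upper summation bound of (\ref{g}).

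Next I parametrize close ordered pairs $(a/q, a'/q')\in \mathcal{F}_{Q,2}^{2}$ with $0<a'/q'-a/q\leq \lambda/\mathscr{N}_{Q}$. Setting $s:=a'q-aq'\geq 1$, $d:=\gcd(q,q')$, $q=dk$, $q'=dk'$, $m:=s/d$, the square-free conditions force $\mu^{2}(d)\mu^{2}(k)\mu^{2}(k')=1$ together with $\gcd(k,k')=\gcd(d,kk')=1$. A short residue-class analysis of $a'k-ak'=m$ under the coprimality $\gcd(a,q)=\gcd(a',q')=1$ shows that the number of admissible $(a,a')$ equals $\mathbf{1}_{\gcd(m,kk')=1}\,w(d,m)$, where $w(d,m)=\prod_{p\mid d,\,p\mid m}(p-1)\prod_{p\mid d,\,p\nmid m}(p-2)$. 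Putting $m$ outermost, the inner double sum over square-free coprime $(k,k')$ with $k,k'\leq Q/d$ and $kk'\geq m\mathscr{N}_{Q}/(\lambda d^{2})$ is evaluated by the Boca-Zaharescu area/equidistribution lemma: its normalised count converges to $(1/d^{2})\log(\lambda\pi^{2}/(3m\delta(p)))$ multiplied by the relevant Euler product in $d$ and $m$ coming from Möbius inversion on the square-free and coprimality constraints. The upper cutoff $m<\lambda\pi^{2}/(3\delta(p))$ arises exactly from requiring this logarithm to be positive.

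The main difficulty, and the source of the intricate form of $F(m)$, is the prime-by-prime assembly of the arithmetic Euler product. Each prime $p$ falls into one of four disjoint cases --- $(p,m)=1$, $p\mid d$, $p\mid m/d$ with $(p,d)=1$, or $p\mid m/d$ with $p\mid d$ --- and the correct local factor must be extracted from $w(d,m)$, the square-free weights $\mu^{2}(k)\mu^{2}(k')$, and the three coprimality conditions $\gcd(k,k')=\gcd(d,kk')=\gcd(m,kk')=1$; conditional sieving over the residual primes $p'\mid m/(dp)$ in the two $p\mid m/d$ cases produces the inner Euler products in $p'$ appearing in (\ref{F(m)}). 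Summing over $d\mid m$ then collapses the remaining expression to $F(m)$, with the overall factor $\mu(d)\phi(d)/d^{2}$ arising from the Möbius inversion enforcing $\gcd(d,kk')=1$ combined with the $\phi(d)$ contribution hidden inside $w(d,m)$. Finally, the error terms --- discrepancy estimates for $(k/Q,k'/Q)$ in the relevant region and bounds on the associated Kloosterman-type sums --- carry over from \cite{Boca,Siskaki} without essential change, since the $\mu^{2}$ weights factor cleanly into the multiplicative arithmetic sums and do not disturb the uniformity of the equidistribution.
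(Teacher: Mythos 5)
Your route is genuinely different from the paper's. The paper runs a Fourier-analytic argument: it expands the test function $h(y)=\sum_n H(\mathscr{N}_Q(y+n))$ in a Fourier series, evaluates $\sum_{\gamma\in\mathcal{F}_{Q,2}}e(r\gamma)$ by M\"obius inversion (Proposition \ref{prop2}), applies Poisson summation to the coefficient sum, and is then left with a bilinear sum over denominators that is handled by the lattice-point count of Lemma \ref{key lemma}. You instead count close pairs directly: fix $q=dk$, $q'=dk'$, $s=a'q-aq'$, count admissible numerator pairs, and then run an area/equidistribution count over $(k,k')$. Your local computation is correct: working modulo $qq'$ (the right setting, since differences are taken mod $1$), the number of $(a,a')$ is exactly $\mathbf{1}_{\gcd(m,kk')=1}\,w(d,m)$, using that square-freeness of $q,q'$ forces $\gcd(d,kk')=1$, so that for $p\mid d$ the two excluded residue classes coincide precisely when $p\mid m$. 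The Dirichlet-series normalization matches \eqref{N(Q)}. Note, though, that both routes ultimately need the same analytic input --- a two-dimensional lattice count over pairs of square-free integers with several coprimality restrictions, which is exactly Lemma \ref{key lemma} here and which your ``M\"obius inversion on the square-free and coprimality constraints'' must reproduce; the error analysis is the Cobeli--Zaharescu lattice lemma, not Kloosterman sums (those enter only for consecutive-spacing statistics).

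The genuine gap is in the final identification with \eqref{F(m)}. Your divisor variable $d=\gcd(q,q')$ is not the $d$ of \eqref{F(m)}: your $d$-sum $\sum_{d\mid s}d^{-2}w(d,s/d)\rho(d,s)$ has all terms positive, whereas \eqref{F(m)} carries the sign $\mu(d)$, whose $d$ originates in the paper from sieving $\gcd(a,q)=1$ inside the Ramanujan-sum identity. Accordingly, your claim that the factor $\mu(d)\phi(d)/d^2$ ``arises from the M\"obius inversion enforcing $\gcd(d,kk')=1$'' cannot be right: in your parametrization $\gcd(d,kk')=1$ is automatic from square-freeness and is never sieved, and $w(d,m)$ equals $\phi(d)$ only when every $p\mid d$ divides $m$. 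Your method therefore yields a correct but differently organized multiplicative expression $G(s)$, and proving the theorem as stated still requires verifying $G(s)=F(s)$ locally at every prime configuration; this does check out in the simplest case, e.g. $G(1)=F(1)=\prod_p\frac{(p-1)(p+2)}{p(p+1)}$, but it is a real computation that your sketch asserts rather than performs. Two smaller slips should also be fixed: with $m=s/d$ the cutoff reads $kk'\geq m\mathscr{N}_Q/(\lambda d)$, not $m\mathscr{N}_Q/(\lambda d^2)$, and the logarithm produced by the area computation is $\log\bigl(\lambda\pi^2/(3md\,\delta(p))\bigr)$, so the variable indexing the outer sum in \eqref{g} is $s=md$, not your $m$.
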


Note that $F(m)$ is finite for every $m$ since each product term is bounded, and the sum runs over the positive divisors of $m$. Since there are finite terms in the sum in \eqref{g} as $1\leq m<\frac{\lambda\pi^2}{3\delta(p)}$, the function $\mathfrak{g}_2(\lambda)$ is well defined with support $\left(\frac{3\delta(p)}{\pi^2},\infty\right)$.

\begin{figure}[h]
\centering
\subfloat{
\includegraphics[width=8cm, height=5cm]{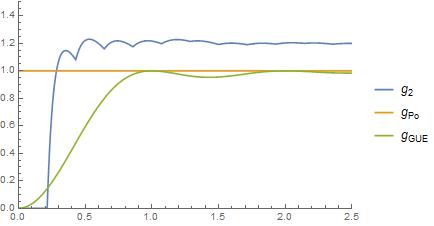}}
\subfloat{\includegraphics[width=8cm, height=5cm]{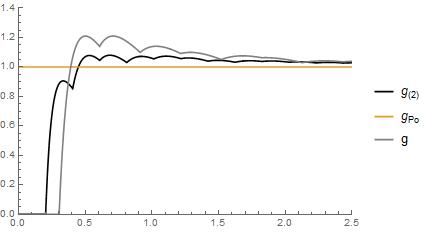}}
\caption{The graphs of $\mathfrak{g}_2(\lambda), g_{Po}(\lambda), g_{GUE}(\lambda), g_{(2)}(\lambda)$ and $g(\lambda)$ }
\end{figure} 

Figure $1$ shows the graph of $\mathfrak{g}_2(\lambda)$ and to compare, we plot the graphs of the pair correlation functions of GUE model, Poisson case, Farey fractions, and Farey fractions with coprimality condition, which are $g_{GUE}(\lambda)=1-\left(\frac{\sin \pi\lambda}{\pi\lambda} \right)^2$, $g_{Po}(\lambda)=1$, $g(\lambda)$ in \eqref{g(lambda)} and $g_{(2)}(\lambda)$ in \eqref{g(m)}  respectively. The graph of $\mathfrak{g}_2(\lambda)$ shows a strong repulsion between the elements of the sequence $\mathcal{F}_{Q,2}$, even more robust than the 
 repulsion amongst the zeros of the Riemann zeta function. As $\lambda\to\infty$, repulsion decreases, and distribution tends to become constant.


\section{Preliminaries}
In this section, we derive results which will be used in proving Theorem\ref{main result}. We begin with counting the number of Farey fractions of order $Q$ with square-free denominators. 
\begin{prop}
Let $\mathcal{F}_{Q,2}$ as in \eqref{F(Q)}, then
\[\mathscr{N}_{Q}=\#\mathcal{F}_{Q,2}=\frac{3Q^2}{\pi^2}\prod_p\left(1-\frac{1}{p(p+1)}\right)+\BigO{Q^{\frac{3}{2}}}.\numberthis\label{N(Q)}\]
\end{prop}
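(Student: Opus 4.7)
The plan is to start from the identity $\mathscr{N}_Q=\sum_{q\leq Q}\mu^2(q)\phi(q)$ and expand $\phi$ via $\phi(q)=\sum_{d\mid q} d\,\mu(q/d)$. Writing $q=dk$ and noting that $\mu^2(dk)\neq 0$ forces $\gcd(d,k)=1$ with both $d,k$ square-free, the double sum rearranges as
\[
\mathscr{N}_Q=\sum_{\substack{k\leq Q\\ k\text{ square-free}}}\mu(k)\,T(Q/k,\,k),\qquad T(X,k):=\sum_{\substack{d\leq X\\ \gcd(d,k)=1\\ d\text{ square-free}}} d.
\]

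To evaluate $T(X,k)$, I would detect square-freeness via $\mu^2(d)=\sum_{e^2\mid d}\mu(e)$, reducing $T(X,k)$ to $\sum_{\gcd(e,k)=1}\mu(e)\,e^2\sum_{\gcd(m,k)=1,\,m\leq X/e^2}m$. The inner sum is handled by the classical estimate
\[
\sum_{\substack{m\leq Y\\ \gcd(m,k)=1}}m=\frac{\phi(k)}{2k}\,Y^2+O(Y\,\tau(k)).
\]
Completing the outer series $\sum_{\gcd(e,k)=1}\mu(e)/e^2=\frac{6}{\pi^2}\prod_{p\mid k}(1-p^{-2})^{-1}$ (tail error $O(1/\sqrt{X})$) and simplifying via $\frac{\phi(k)}{k}\prod_{p\mid k}\frac{p^2}{p^2-1}=\prod_{p\mid k}\frac{p}{p+1}$ for square-free $k$, I would obtain
\[
T(X,k)=\frac{3X^2}{\pi^2}\prod_{p\mid k}\frac{p}{p+1}+O(\tau(k)\,X^{3/2}).
\]

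Substituting back, the accumulated error is $\sum_{k\leq Q}(Q/k)^{3/2}\tau(k)=O(Q^{3/2})$, controlled because $\sum_k\tau(k)/k^{3/2}=\zeta(3/2)^2$ converges. For the main term I would extend the $k$-sum to all square-free integers (the tail contributes $O(Q)$) and recognise the resulting multiplicative sum as an Euler product
\[
\sum_{k\text{ square-free}}\frac{\mu(k)}{k^2}\prod_{p\mid k}\frac{p}{p+1}=\prod_p\left(1-\frac{1}{p(p+1)}\right),
\]
which yields the claimed asymptotic. No step is especially delicate; the only point demanding care is ensuring the error term in $T(X,k)$ is uniform in $k$, so that after weighting by $k^{-3/2}$ and summing, the total error stays at $O(Q^{3/2})$ rather than growing with $k$.
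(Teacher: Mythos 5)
Your proposal is correct and follows essentially the same route as the paper: both expand $\phi=\mu*\mathrm{Id}$ to reduce $\sum_{q\leq Q}\mu^2(q)\phi(q)$ to a M\"obius-weighted family of inner sums of the form $\sum_{d\leq X,\ (d,k)=1,\ d\ \text{square-free}}d$, and both arrive at the Euler product $\prod_p\left(1-\frac{1}{p(p+1)}\right)$ with error $\BigO{Q^{3/2}}$. The only difference is in evaluating that inner sum: the paper rewrites the weight as a count over $j\leq s_1$ and cites Suryanarayana's asymptotic for square-free integers coprime to a fixed modulus, whereas you detect square-freeness via $\mu^2(d)=\sum_{e^2\mid d}\mu(e)$ and sum directly --- a self-contained variant giving the same uniform error $\BigO{\tau(k)X^{3/2}}$ and hence the same total error after summing over $k$.
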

\begin{proof}
  We have 
  \begin{align*}
\#\mathcal{F}_{Q,2}&=\sum_{\substack{s\leq Q\\ s\ \text{is square-free} }}\phi(s)=\sum_{s\leq Q}\phi(s)\mu(s)^2
      =\sum_{d\leq Q}\frac{\mu(d)}{d}\sum_{\substack{s\leq Q\\ d|s}}\mu(s)^2 s\\ &=\sum_{d\leq Q}\mu(d)\sum_{j\leq \frac{Q}{d}}\sum_{\substack{j\leq s_1\leq \frac{Q}{d}\\ (s_1,d)=1}}\mu(s_1)^2. \numberthis\label{numberF}
  \end{align*}
  The inner sum in \ref{numberF} is computed using \cite[Theorem 1]{Suryanarayana}
  \[\sum_{\substack{ s\leq \frac{Q}{d}\\ (s,d)=1}}\mu(s)^2=\frac{1}{\zeta(2)}\cdot Q\cdot\frac{\phi(d)}{d^2}\prod_{p|d}\left(1-\frac{1}{p^2}\right)^{-1}+\BigO{\left(\frac{Q}{d}\right)^{1/2}}.\]
  Inserting this into \eqref{numberF}, we obtain
  \begin{align*}
      \#\mathcal{F}_{Q,2}=&\sum_{d\leq Q}\mu(d)\sum_{j\leq \frac{Q}{d}}\left(\frac{Q\phi(d)}{d^2\zeta(2)}\prod_{p|d}\left(1-\frac{1}{p^2}\right)^{-1}+\BigO{\left(\frac{Q}{d}\right)^{1/2}}\right)\\ &-\sum_{d\leq Q}\mu(d)\sum_{j\leq \frac{Q}{d}}\left(\frac{j\phi(d)}{d\zeta(2)}\prod_{p|d}\left(1-\frac{1}{p^2}\right)^{-1}+\BigO{j^{1/2}}\right) \\
      =&\frac{Q^2}{2\zeta(2)}\sum_{d\leq Q}\frac{\mu(d)\phi(d)}{d^3}\prod_{p|d}\left(1-\frac{1}{p^2}\right)^{-1}+\BigO{Q^\frac{3}{2}}\\
      =&\frac{3Q^2}{\pi^2}\sum_{d=1}^{\infty}\frac{\mu(d)\phi(d)}{d^3}\prod_{p|d}\left(1-\frac{1}{p^2}\right)^{-1}+\BigO{Q^\frac{3}{2}}\\
       =&\frac{3Q^2}{\pi^2}\prod_{p}\left(1-\frac{1}{p(p+1)}\right)+\BigO{Q^\frac{3}{2}}.
  \end{align*}
\end{proof}
We next prove a formula for exponential sums over Farey fractions in $\mathcal{F}_{Q,2}$.
\begin{prop}\label{prop2}
     Let $r\in \mathbb{Z},$ we have
        \[\sum_{\gamma\in \mathcal{F}_{Q,2}}e(r\gamma)=\sum_{d\leq Q}\mu(d)\sum_{\substack{q\leq\frac{Q}{d}\\(q,d)=1,\ q|r}}q\mu(q)^2,\]
        where $e(x)=\exp{(2\pi ix)}.$
\end{prop}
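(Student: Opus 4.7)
The plan is to expand the sum over $\mathcal{F}_{Q,2}$ according to its defining conditions and then unwind the coprimality constraint $(a,q)=1$ via M\"obius inversion. I would start by writing
\[\sum_{\gamma\in \mathcal{F}_{Q,2}}e(r\gamma)=\sum_{\substack{q\leq Q \\ \mu(q)^2=1}}\sum_{\substack{1\leq a\leq q \\ (a,q)=1}}e(ra/q),\]
and then invoke the standard identity $\mathbf{1}_{(a,q)=1}=\sum_{d\mid\gcd(a,q)}\mu(d)$. Interchanging the order of summation and writing $a=db$ with $1\leq b\leq q/d$ rewrites the inner sum as
\[\sum_{d\mid q}\mu(d)\sum_{b=1}^{q/d}e\!\left(\frac{rb}{q/d}\right).\]

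The next step is to apply the elementary orthogonality relation $\sum_{b=1}^{m}e(rb/m)=m\cdot\mathbf{1}_{m\mid r}$ to the innermost sum with $m=q/d$. This collapses the triple sum into a double sum indexed by pairs $(d,m)$ with $dm\leq Q$ and $m\mid r$, whose summand is simply $\mu(d)\cdot m$.

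Finally, I would reindex by taking $q=m$ as the new inner variable and $d$ as the outer variable. Under the factorization of a squarefree denominator as $dm$, squarefreeness of $dm$ is equivalent to $d$ and $m$ each being squarefree together with $(d,m)=1$; squarefreeness of $d$ is already enforced by $\mu(d)\neq 0$, while squarefreeness of the new $q=m$ is recorded by the factor $\mu(q)^2$. Rearranging the resulting double sum by fixing $d$ first and letting $q\leq Q/d$ run over the admissible inner indices yields precisely
\[\sum_{d\leq Q}\mu(d)\sum_{\substack{q\leq Q/d \\ (q,d)=1,\ q\mid r}}q\mu(q)^2.\]

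There is no real obstacle; the argument is a chain of standard identities (M\"obius inversion plus character orthogonality). The one piece of bookkeeping to watch is verifying that the squarefree condition on the original denominator $q$ decouples cleanly under $q=dm$ into the conditions $(q,d)=1$ and $\mu(q)^2=1$ that appear in the final expression, so that no spurious or missing contributions arise when switching between the two indexings.
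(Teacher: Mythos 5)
Your proposal is correct and follows essentially the same route as the paper: Möbius inversion to remove the coprimality condition $(a,q)=1$, the substitution $q=dm$, orthogonality of additive characters to evaluate the inner sum, and the observation that squarefreeness of $dm$ decouples into $\mu(d)\neq 0$, $\mu(m)^2=1$, and $(d,m)=1$. The only cosmetic difference is that you apply orthogonality before reindexing, whereas the paper reindexes first; the content is identical.
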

\begin{proof}
We have
\begin{align*}
    \sum_{\gamma\in \mathcal{F}_{Q,2}}e(r\gamma)&=\sum_{\substack{q\leq Q\\\mu(q)^2=1}}\sum_{\substack{1\leq a\leq q\\(a,q)=1}}e\left(\frac{ar}{q} \right)
    =\sum_{\substack{q\leq Q\\\mu(q)^2=1}}\sum_{1\leq a\leq q}e\left(\frac{ar}{q} \right)\sum_{d|\gcd(a,q)}\mu(d)\\
    &=\sum_{d\leq Q}\mu(d) \sum_{\substack{q\leq Q\\\mu(q)^2=1, d|q}}\sum_{\substack{1\leq a\leq q\\ d|a}}e\left(\frac{ar}{q} \right)\\
    &=\sum_{d\leq Q}\mu(d) \sum_{\substack{q_1\leq \frac{Q}{d}\\ \mu(q_1)^2=1, (q_1,d)=1}}\sum_{1\leq a_1\leq q_1}e\left(\frac{a_1r}{q_1} \right)\\
    &=\sum_{d\leq Q}\mu(d)\sum_{\substack{q_1\leq \frac{Q}{d}\\ (q_1,d)=1, q_1|r}}q_1\mu(q_1)^2.
\end{align*}
    \end{proof}
  We next derive an integral formula for lattice points in a bounded domain satisfying certain divisibility constraints. 
  \begin{lem}\label{key lemma}
     Let $\Omega\subset [1, R]^2$ be a bounded region, and $f$ is a continuously differentiable function on $\Omega.$ For any positive integers $r_1$ and $r_2$, we have
     \[\sum_{\substack{(a,b)\in \Omega\cap\mathbb{Z}^2\\(a,r_1)=(b,r_2)=(a,b)=1\\\mu(a)^2=\mu(b)^2=1}}f(a,b)=\frac{6P}{\pi^2}\iint_{\Omega}f(x,y)dxdy+E,\numberthis\label{2.3}\] 
     where
      \begin{align*} 
  P=\frac{\phi(r_1)\phi(r_2)}{r_1r_2}\prod_{p|\gcd(r_1, r_2)}\left(1-\frac{1}{p^2}\right)^{-1}\prod_{\substack{p\\ (p,r_1r_2)=1}}\left(1-\frac{1}{p(p+1)}\right)\left(1-\frac{1}{p^2+p-1}\right),
     \end{align*}
     and
       \[ E\ll \left(\tau(r_1)\left\|{\frac{\partial f}{\partial x}}\right\|_{\infty}+\tau(r_2)\left\|{\frac{\partial f}{\partial y}}\right\|_{\infty}\right)\text{Area}(\Omega)\sqrt{R}\log^2 R+\|f\|_{\infty}(\tau(r_1)+\tau(r_2))R^{\frac{3}{2}}\log^2 R.\]
    \end{lem}
    \begin{proof}
      We begin with removing the square-free conditions on the left side of \eqref{2.3} by M\"{o}bius summation and obtain
        \begin{align*}
           S_{f,\Omega,r_1,r_2}:&=\sum_{\substack{(a,b)\in \Omega\cap\mathbb{Z}^2\\(a,r_1)=(b,r_2)=(a,b)=1\\\mu(a)^2=\mu(b)^2=1}}f(a,b)=\sum_{\substack{(a,b)\in \Omega\cap\mathbb{Z}^2\\(a,r_1)=(b,r_2)=(a,b)=1}}f(a,b)\sum_{d_1^2|a}\mu(d_1)\sum_{d_2^2|b}\mu(d_2)\\
           &=\sum_{d_1^2,d_2^2\leq R}\mu(d_1)\mu(d_2)\sum_{\substack{(a,b)\in \Omega\cap\mathbb{Z}^2\\(a,r_1)=(b,r_2)=(a,b)=1\\d_1^2|a,\ d_2^2|b}}f(a,b)\\
            &=\sum_{d_1^2,d_2^2\leq R}\mu(d_1)\mu(d_2)\sum_{\substack{(a_0,b_0)\in \Omega_{(d_1,d_2)}\cap\mathbb{Z}^2\\(d_1^2a_0,r_1)=(d_2^2b_0,r_2)=1\\(d_1^2a_0,d_2^2b_0)=1}}f_{(d_1,d_2)}(a_0,b_0)\\
            &=\sum_{\substack{d_1^2,d_2^2\leq R\\(d_1,r_1)=(d_2,r_2)=1\\(d_1,d_2)=1}}\mu(d_1)\mu(d_2)\sum_{\substack{(a_0,b_0)\in \Omega_{(d_1,d_2)}\cap\mathbb{Z}^2\\(a_0,r_1d_2)=(b_0,r_2d_1)=1\\(a_0,b_0)=1}}f_{(d_1,d_2)}(a_0,b_0),\numberthis\label{sum7}
       \end{align*}
       where $f_{(d_1,d_2)}(a_0,b_0)=f(d_1^2a_0,d_2^2b_0)$ and $\Omega_{(d_1,d_2)}=\left\{(x,y)|\ x\in\frac{1}{d_1^2}[1,R], y\in\frac{1}{d_2^2}[1,R]\right\}.$
 The inner sum in \eqref{sum7} can be estimated using  M\"{o}bius summation thereby removing the coprimality condition 
 
     \begin{align*}
         S_{f,\Omega,r_1,r_2}^{1}:&=\sum_{\substack{(a_0,b_0)\in \Omega_{(d_1,d_2)}\cap\mathbb{Z}^2\\(a_0,r_1d_2)=(b_0,r_2d_1)=1\\(a_0,b_0)=1}}f_{(d_1, d_2)}(a_0, b_0)
        =\sum_{\substack{(a_0,b_0)\in \Omega_{(d_1,d_2)}\cap\mathbb{Z}^2\\(a_0,r_1d_2)=(b_0,r_2d_1)=1}}f_{(d_1,d_2)}(a_0,b_0)\sum_{d|\gcd(a_0,b_0)}\mu(d)\\
        &=\sum_{d\leq\frac{R}{\max(d_1^2,d_2^2)}}\mu(d)\sum_{\substack{(a_1,b_1)\in \frac{1}{d}\Omega_{(d_1,d_2)}\cap\mathbb{Z}^2\\(da_1,r_1d_2)=(db_1,r_2d_1)=1}}f_{(d_1,d_2)}(da_1, db_1)\\
         &=\sum_{\substack{d\leq\frac{R}{\max(d_1^2,d_2^2)}\\(d,r_1r_2d_1d_2)=1}}\mu(d)\sum_{\substack{(a_1,b_1)\in \frac{1}{d}\Omega_{(d_1,d_2)}\cap\mathbb{Z}^2\\(a_1,r_1d_2)=(b_1,r_2d_1)=1}}f_{(d_1,d_2)}(da_1, db_1)\\
          &=\sum_{\substack{d\leq\frac{R}{\max(d_1^2,d_2^2)}\\(d,r_1r_2d_1d_2)=1}}\mu(d)\sum_{(a_1,b_1)\in \frac{1}{d}\Omega_{(d_1,d_2)}\cap\mathbb{Z}^2}f_{(d_1,d_2)}(da_1, db_1)\sum_{s|\gcd(a_1,r_1d_2)}\mu(s)\sum_{t|\gcd(b_1,r_2d_1)}\mu(t)\\
           &=\sum_{\substack{d\leq\frac{R}{\max(d_1^2,d_2^2)}\\(d,r_1r_2d_1d_2)=1}}\mu(d)\sum_{s|r_1d_2,\ t|r_2d_1}\mu(s)\mu(t)\sum_{(a^{\prime},b^{\prime})\in \Delta\cap\mathbb{Z}^2}\mathscr{G}(a^{\prime},b^{\prime}),\numberthis\label{sum8}
    \end{align*}
    where $\mathscr{G}(a^{\prime},b^{\prime})=f(dsd_1^2a^{\prime},dtd_2^2b^{\prime})$ and $\Delta=\left\{(x,y)|\ x\in\frac{1}{dsd_1^2}[1,R], y\in\frac{1}{dtd_2^2}[1,R]\right\}$.  We use \cite[Lemma 1]{Cobeli} to estimate the innermost sum in \eqref{sum8} and obtain
    \small\begin{align*}
        \sum_{(a^{\prime},b^{\prime})\in \Delta\cap\mathbb{Z}^2}\mathscr{G}(a^{\prime},b^{\prime}) =&\iint_{\Delta}\mathscr{G}(x,y)dxdy+\BigO{\text{Area}(\Delta)\left(\left\|{\frac{\partial \mathscr{G}}{\partial x}}\right\|_{\infty}+\left\|{\frac{\partial \mathscr{G}}{\partial y}}\right\|_{\infty}\right)}\\&+\BigO{\|\mathscr{G}\|_{\infty}(1+\text{length}(\partial\Delta))}\\
        =&\frac{1}{std^2d_1^2d_2^2}\iint_{\Omega}f(x,y)dxdy +\BigO{\left(\frac{1}{dtd_2^2}\left\|{\frac{\partial f}{\partial x}}\right\|_{\infty}+\frac{1}{dsd_1^2}\left\|{\frac{\partial f}{\partial y}}\right\|_{\infty}\right)\text{Area}(\Omega)}\\ &+\BigO{\|f\|_{\infty}\frac{R}{d}\left(\frac{1}{sd_1^2}+\frac{1}{td_2^2}\right)}.\numberthis\label{sum9}
           \end{align*}
Inserting \eqref{sum9} in \eqref{sum8}, we get
\begin{align*}
     S_{f,\Omega,r_1,r_2}^{1}=&\frac{1}{d_1^2d_2^2}\sum_{\substack{d\leq\frac{R}{\max(d_1^2,d_2^2)}\\(d,r_1r_2d_1d_2)=1}}\frac{\mu(d)}{d^2}\sum_{s|r_1d_2,\ t|r_2d_1}\frac{\mu(s)\mu(t)}{st}\iint_{\Omega}f(x,y)dxdy\\ &+\BigO{\text{Area}(\Omega)\log^2 R\left(\frac{\tau(r_1d_2)}{d_2^2}\left\|{\frac{\partial f}{\partial x}}\right\|_{\infty}+\frac{\tau(r_2d_1)}{d_1^2}\left\|{\frac{\partial f}{\partial y}}\right\|_{\infty}\right)}\\ &+\BigO{R\log^2R\ \|f\|_{\infty}\left(\frac{\tau(r_2d_1)}{d_1^2}+\frac{\tau(r_1d_2)}{d_2^2}\right)}.\numberthis\label{sum10}
    \end{align*}
  The summation in \eqref{sum10} is estimated as
  \begin{align*}
       S_{r_1,r_2}:&=\sum_{\substack{d\leq\frac{R}{\max(d_1^2,d_2^2)}\\(d,r_1r_2d_1d_2)=1}}\frac{\mu(d)}{d^2}\sum_{s|r_1d_2,\ t|r_2d_1}\frac{\mu(s)\mu(t)}{st}\\
      &=\left(\sum_{\substack{d=1\\(d,r_1r_2d_1d_2)=1}}^{\infty}\frac{\mu(d)}{d^2}+\BigO{\frac{\max(d_1^2,d_2^2)}{R}}\right)\sum_{s|r_1d_2}\frac{\mu(s)}{s}\sum_{t|r_2d_1}\frac{\mu(t)}{t}\\
      &=\prod_{\substack{p\\(p,r_1r_2d_1d_2)=1}}\left(1-\frac{1}{p^2}\right)\prod_{p|r_1d_2}\left(1-\frac{1}{p}\right)\prod_{p|r_2d_1}\left(1-\frac{1}{p}\right)+\BigO{\frac{\max(d_1^2,d_2^2)}{R}}\\
      &=\frac{1}{\zeta(2)}\prod_{p|r_1r_2d_1d_2}\left(1-\frac{1}{p^2}\right)^{-1}\prod_{p|r_1d_2}\left(1-\frac{1}{p}\right)\prod_{p|r_2d_1}\left(1-\frac{1}{p}\right)+\BigO{\frac{\max(d_1^2,d_2^2)}{R}}.\numberthis\label{sum11}
  \end{align*}
So, \eqref{sum11} in conjunction with \eqref{sum10} and \eqref{sum7}, gives
\small\begin{align*}
    S_{f,\Omega,r_1,r_2}=&\frac{1}{\zeta(2)}\sum_{\substack{d_1^2,d_2^2\leq R\\(d_1,r_1)=(d_2,r_2)=1\\(d_1,d_2)=1}}\frac{\mu(d_1)\mu(d_2)}{d_1^2d_2^2}\\&\times\prod_{p|r_1r_2d_1d_2}\left(1-\frac{1}{p^2}\right)^{-1}\prod_{p|r_1d_2}\left(1-\frac{1}{p}\right)\prod_{p|r_2d_1}\left(1-\frac{1}{p}\right)\iint_{\Omega}f(x,y)dxdy+ E\\
    =&\frac{1}{\zeta(2)}\prod_{p|r_1r_2}\left(1-\frac{1}{p^2}\right)^{-1}\prod_{p|r_1}\left(1-\frac{1}{p}\right)\prod_{p|r_2}\left(1-\frac{1}{p}\right)\sum_{\substack{d_1^2,d_2^2\leq R\\(d_1,r_1)=(d_2,r_2)=1\\(d_1,d_2)=1}}\frac{\mu(d_1)\mu(d_2)}{d_1^2d_2^2}\\&\times\prod_{\substack{p|d_1d_2\\(p,r_1r_2)=1}}\left(1-\frac{1}{p^2}\right)^{-1}\prod_{\substack{p|d_1\\(p,r_2)=1}}\left(1-\frac{1}{p}\right)\prod_{\substack{p|d_2\\(p,r_1)=1}}\left(1-\frac{1}{p}\right)\iint_{\Omega}f(x,y)dxdy+ E.\numberthis\label{sum12}
\end{align*}
To estimate the summation in \eqref{sum12}, we write
\small\begin{align*}
    S_{r_1,r_2}^{11}:=&\sum_{\substack{d_1^2,d_2^2\leq R\\(d_1,r_1)=(d_2,r_2)=1\\(d_1,d_2)=1}}\frac{\mu(d_1)\mu(d_2)}{d_1^2d_2^2}\prod_{\substack{p|d_1d_2\\(p,r_1r_2)=1}}\left(1-\frac{1}{p^2}\right)^{-1}\prod_{\substack{p|d_1\\(p,r_2)=1}}\left(1-\frac{1}{p}\right)\prod_{\substack{p|d_2\\(p,r_1)=1}}\left(1-\frac{1}{p}\right)\\
    =&\sum_{\substack{d_1^2\leq R\\(d_1,r_1)=1}}\frac{\mu(d_1)}{d_1^2}\prod_{\substack{p|d_1\\(p,r_1r_2)=1}}\left(1-\frac{1}{p^2}\right)^{-1}\prod_{\substack{p|d_1\\(p,r_2)=1}}\left(1-\frac{1}{p}\right)\\&\times\sum_{\substack{d_2^2\leq R\\(d_2,r_2)=(d_1,d_2)=1}}\frac{\mu(d_2)}{d_2^2}\prod_{\substack{p|d_2\\(p,r_1r_2d_1)=1}}\left(1-\frac{1}{p^2}\right)^{-1}\prod_{\substack{p|d_2\\(p,r_1)=1}}\left(1-\frac{1}{p}\right).\numberthis\label{sum13}
\end{align*}
The inner sum in \eqref{sum13} is estimated as
\small\begin{align*}
    S_{r_1,r_2}^{{11}^{\prime}}:=&\sum_{\substack{d_2^2\leq R\\(d_2,d_1r_2)=1}}\frac{\mu(d_2)}{d_2^2}\prod_{\substack{p|d_2\\(p,r_1r_2d_1)=1}}\left(1-\frac{1}{p^2}\right)^{-1}\prod_{\substack{p|d_2\\(p,r_1)=1}}\left(1-\frac{1}{p}\right)\\
     =&\prod_{\substack{p|r_1\\(p,r_2)=1}}\left(1-\frac{1}{p^2}\right)\prod_{\substack{p|d_1\\(p,r_2)=1, (p,r_1)\ne 1}}\left(1-\frac{1}{p^2}\right)^{-1}\prod_{\substack{p\\(p,r_1r_2)=1}}\left(1-\frac{1}{p(p+1)}\right)\\&\times\prod_{\substack{p|d_1\\(p,r_1r_2)=1}}\left(1-\frac{1}{p(p+1)}\right)^{-1}+\BigO{\frac{1}{\sqrt{R}}}.\numberthis\label{sum14}
\end{align*}
So, inserting \eqref{sum14} into \eqref{sum13}, we obtain
\small\begin{align*}
    S_{r_1,r_2}^{11}=&\prod_{\substack{p|r_1\\(p,r_2)=1}}\left(1-\frac{1}{p^2}\right)\prod_{\substack{p\\(p,r_1r_2)=1}}\left(1-\frac{1}{p(p+1)}\right)\sum_{\substack{d_1^2\leq R\\(d_1,r_1)=1}}\frac{\mu(d_1)}{d_1^2}\prod_{\substack{p|d_1\\(p,r_1r_2)=1}}\left(1-\frac{1}{p^2}\right)^{-1}\\&\times\prod_{\substack{p|d_1\\(p,r_2)=1}}\left(1-\frac{1}{p}\right)\prod_{\substack{p|d_1\\(p,r_2)=1, (p,r_1)\ne 1}}\left(1-\frac{1}{p^2}\right)\prod_{\substack{p|d_1\\(p,r_1r_2)=1}}\left(1-\frac{1}{p(p+1)}\right)^{-1}+\BigO{\frac{1}{\sqrt{R}}}\\
    =&\prod_{\substack{p|r_1\\(p,r_2)=1}}\left(1-\frac{1}{p^2}\right)\prod_{\substack{p|r_2\\(p,r_1)=1}}\left(1-\frac{1}{p^2}\right)\prod_{\substack{p\\(p,r_1r_2)=1}}\left(1-\frac{1}{p(p+1)}\right)\left(1-\frac{1}{p^2+p-1}\right)+\BigO{\frac{1}{\sqrt{R}}}.\numberthis\label{sum15}
\end{align*}
Inserting \eqref{sum15} in \eqref{sum12} gives the required result.

\end{proof}
We also use the Poisson summation formula which we state below.
  \begin{prop}\cite[p. 538]{Vaughan}\label{Poisson} (Poisson's summation formula).
    Let $f\in L^1(\mathbb{R})$ and $\widehat{f}$ be the Fourier transform of $f$, then we have
    \[\sum_{n=-\infty}^{\infty}f(n)=\sum_{m=-\infty}^{\infty}\widehat{f}(m). \]
  \end{prop}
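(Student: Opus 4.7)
The plan is to use the classical periodization argument: construct the $1$-periodic function $F(x) := \sum_{n \in \Z} f(x+n)$, expand $F$ as a Fourier series on $[0,1]$, identify the Fourier coefficients as values of $\widehat{f}$ at integers, and then evaluate at $x=0$.

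First I would verify that $F$ is a well-defined $L^1$ function on $[0,1]$. By Tonelli's theorem,
\[\int_0^1 \sum_{n \in \Z} |f(x+n)|\, dx = \int_{-\infty}^{\infty} |f(y)|\, dy = \|f\|_{L^1(\R)} < \infty,\]
so the series $\sum_n f(x+n)$ converges absolutely for almost every $x \in [0,1]$, defines an element $F \in L^1([0,1])$, and clearly satisfies $F(x+1) = F(x)$.

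Next I would compute the $m$-th Fourier coefficient of $F$ on the circle:
\[\widehat{F}(m) = \int_0^1 F(x) e^{-2\pi i m x}\, dx = \sum_{n \in \Z} \int_0^1 f(x+n) e^{-2\pi i m x}\, dx,\]
where the interchange of sum and integral is legitimized by the $L^1$ bound above. Substituting $y = x+n$ in the $n$-th integral and using $e^{-2\pi i m n} = 1$ gives
\[\widehat{F}(m) = \sum_{n \in \Z} \int_n^{n+1} f(y) e^{-2\pi i m y}\, dy = \int_{-\infty}^{\infty} f(y) e^{-2\pi i m y}\, dy = \widehat{f}(m).\]
Fourier inversion for $F$ on the circle then yields $F(x) = \sum_{m \in \Z} \widehat{f}(m) e^{2\pi i m x}$, and setting $x = 0$ produces $\sum_n f(n) = \sum_m \widehat{f}(m)$.

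The main obstacle is the pointwise Fourier inversion at $x = 0$: the hypothesis $f \in L^1(\R)$ alone does not guarantee $\widehat{f} \in \ell^1(\Z)$, nor pointwise convergence of the Fourier series of $F$ at $0$. To circumvent this, I would either strengthen the hypotheses (standard convenient assumptions being that $f$ is continuous, the periodization converges uniformly on compacts, and $\widehat{f}$ is absolutely summable), or regularize by convolving with a Gaussian $\varphi_\epsilon(x) = \epsilon^{-1} e^{-\pi x^2/\epsilon^2}$, for which the identity holds trivially since $f \ast \varphi_\epsilon$ is Schwartz, and then pass to the limit $\epsilon \to 0^+$ using dominated convergence on both sides. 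In the application in this paper the identity will be used with smooth, rapidly decaying test functions, for which the classical formulation applies directly and the stated form of the identity is valid.
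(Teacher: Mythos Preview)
The paper does not prove this proposition at all; it merely records the statement and cites \cite[p. 538]{Vaughan}. Your periodization argument is the standard classical proof and is correct under the additional regularity you yourself flag (continuity of $f$, absolute summability of $\widehat{f}$, or the Gaussian regularization), and as you note the applications in Section~3 are to smooth compactly supported test functions where no such subtleties arise. So there is nothing to compare: you have supplied a proof where the paper gives only a reference.
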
 
\section{Proof of Theorem \ref{main result}}
 We aim to estimate, for any positive real number $\Lambda
 $, the quantity
\[S(\Lambda)=\frac{1}{\mathscr{N}_Q}\#\left\{(\gamma_1,\gamma_2)\in \mathcal{F}_{Q,2}^2: \gamma_1\ne\gamma_2, \gamma_1-\gamma_2\in\frac{1}{\mathscr{N}_Q}(0,\Lambda)+\mathbb{Z}\right\},\]
as $Q\to \infty.$ Let $H$ be any continuously differentiable function with Supp\ $H\subset(0,\Lambda).$  Define
\[h(y)=\sum_{n\in\mathbb{Z}}H(\mathscr{N}_Q(y+n)),\ y\in\mathbb{R},\]  and
\[S=\sum_{\substack{\gamma_1,\gamma_2\in \mathcal{F}_{Q,2}\\\gamma_1\ne\gamma_2 }}h(\gamma_1-\gamma_2) 
 =\sum_{\substack{\gamma_1,\gamma_2\in \mathcal{F}_{Q,2}\\n\in\mathbb{Z}}}H(\mathscr{N}_{Q}(\gamma_1-\gamma_2+n)), \numberthis\label{sum}\]
 since Supp$H\subset(0,\Lambda)$, the condition $\gamma_1$ and $\gamma_2$ are distinct can be removed for $Q$ large such that $\mathscr{N}_{Q}>\Lambda.$ Let
 \[h(y)=\sum_{n\in \mathbb{Z}}c_ne(ny)\]
 be the Fourier series expansion of $h$, with the Fourier coefficient 
 \begin{align*}
     c_n&=\int_0^1h(x)e(-nx)dx=\sum_{m\in\mathbb{Z}}\int_0^1H(\mathscr{N}_{Q}(x+m))e(-nx)dx\\
     &=\int_{\mathbb{R}}H(\mathscr{N}_{Q}v)e(-nv)dv=\frac{1}{\mathscr{N}_{Q}}\widehat{H}\left(\frac{n}{\mathscr{N}_{Q}}\right),
 \end{align*}
 where $\widehat{H}$ is the Fourier transform of $H.$ Then by \eqref{sum}, we have
   \begin{align*}
     S&=\sum_{\gamma_1,\gamma_2\in \mathcal{F}_{Q,2}}h(\gamma_1-\gamma_2)
     =\sum_{\substack{\gamma_1,\gamma_2\in \mathcal{F}_{Q,2}\\n\in\mathbb{Z}}}c_ne(n(\gamma_1-\gamma_2))\\
     &=\sum_{n\in \mathbb{Z}}c_n\sum_{\gamma_1\in \mathcal{F}_{Q,2}}e(n\gamma_1)\sum_{\gamma_2\in \mathcal{F}_{Q,2}}e(n\gamma_2).\numberthis\label{exp}
 \end{align*}
Using Proposition \ref{prop2} in \eqref{exp}, we obtain
 \begin{align*}
     S&=\sum_{n\in\mathbb{Z}}c_n\sum_{d_1\leq Q}\mu(d_1)\sum_{\substack{q_1\leq \frac{Q}{d_1}\\(q_1,d_2)=1, q_1|n}}q_1\mu(q_1)^2 \sum_{d_2\leq Q}\mu(d_2)\sum_{\substack{q_2\leq \frac{Q}{d_2}\\(q_2,d_2)=1, q_2|n}}q_2\mu(q_2)^2\\
     &=\sum_{d_1, d_2\leq Q}\mu(d_1)\mu(d_2)\sum_{\substack{q_1\leq \frac{Q}{d_1}, q_2\leq \frac{Q}{d_2}\\(q_1,d_1)=1, (q_2,d_2)=1}}q_1q_2\mu(q_1)^2\mu(q_2)^2\sum_{r\in\mathbb{Z}}c_{[q_1,q_2]r}.\numberthis\label{sum2}
 \end{align*}
To estimate the innermost sum, we consider the function: For each $y>0$
\[H_y(x)=\frac{1}{y}H\left(\frac{\mathscr{N}_{Q}x}{y}\right),\  x\in \mathbb{R}.\]
Then
\[\widehat{H}_y(z)=\frac{1}{\mathscr{N}_Q}\widehat{H}\left(\frac{yz}{\mathscr{N}_Q} \right).\]
Using the Fourier transform and a suitable change of variable, we have
\begin{align*}
    c_{[q_1,q_2]r}&=\int_{\mathbb{R}}H(\mathscr{N}_{Q}t)e(-[q_1,q_2]rt)dt\\
    &=\int_{\mathbb{R}}\frac{1}{[q_1,q_2]}H\left(\frac{\mathscr{N}_{Q}u}{[q_1,q_2]} \right)e(-ru)du\\
    &=\int_{\mathbb{R}}H_{[q_1,q_2]}(u)e(-ru)du=\widehat{H}_{[q_1,q_2]}(r).
\end{align*}
Using Proposition \ref{Poisson}, we obtain
\begin{align*}
    \sum_{r\in\mathbb{Z}}c_{[q_1,q_2]r}&=\sum_{r\in\mathbb{Z}}\widehat{H}_{[q_1,q_2]}(r)
    =\sum_{r\in\mathbb{Z}}H_{[q_1,q_2]}(r)
    =\sum_{r\in\mathbb{Z}}\frac{1}{[q_1,q_2]}H\left(\frac{\mathscr{N}_{Q}r}{[q_1,q_2]} \right).\numberthis\label{Fourier coefficient}
\end{align*}
Combining \eqref{sum2} and \eqref{Fourier coefficient}, we get
\begin{align*}
    S &=\sum_{d_1, d_2\leq Q}\mu(d_1)\mu(d_2)\sum_{\substack{q_1\leq \frac{Q}{d_1}, q_2\leq \frac{Q}{d_2}\\(q_1,d_1)=1, (q_2,d_2)=1}}q_1q_2\mu(q_1)^2\mu(q_2)^2\sum_{r\in\mathbb{Z}}\frac{1}{[q_1,q_2]}H\left(\frac{\mathscr{N}_{Q}r}{[q_1,q_2]} \right)\\
    &=\sum_{d_1, d_2\leq Q}\mu(d_1)\mu(d_2)\sum_{\substack{q_1\leq \frac{Q}{d_1}, q_2\leq \frac{Q}{d_2}\\(q_1,d_1)=1, (q_2,d_2)=1}}\gcd(q_1,q_2)\mu(q_1)^2\mu(q_2)^2\sum_{r\in\mathbb{Z}}H\left(\frac{\mathscr{N}_{Q}r}{[q_1,q_2]} \right).\numberthis\label{sum3}
\end{align*}
Let $\gcd(q_1,q_2)=\beta,$ so that $q_1=q_1^{\prime}\beta,\ q_2= q_2^{\prime}\beta$ with $(q_1^{\prime},q_2^{\prime})=1$. Then \eqref{sum3} becomes

\begin{align*}
    S &=\sum_{d_1, d_2\leq Q}\mu(d_1)\mu(d_2)\sum_{\beta\leq\frac{Q}{\max\{d_1,d_2\}}}\beta \sum_{\substack{q_1^{\prime}\leq \frac{Q}{\beta d_1}, q_2^{\prime}\leq \frac{Q}{\beta d_2}\\(q_1^{\prime}\beta,d_1)=1, (q_2^{\prime}\beta,d_2)=1\\ (q_1^{\prime},q_2^{\prime})=1}}\mu(q_1^{\prime}\beta)^2\mu(q_2^{\prime}\beta)^2\sum_{r\in\mathbb{Z}}H\left(\frac{\mathscr{N}_{Q}r}{q_1^{\prime}q_2^{\prime}\beta} \right)\\
    &=\sum_{d_1, d_2\leq Q}\mu(d_1)\mu(d_2)\sum_{\substack{\beta\leq\frac{Q}{\max\{d_1,d_2\}}\\(\beta,d_1d_2)=1}}\beta\mu(\beta)^2 \sum_{\substack{q_1^{\prime}\leq \frac{Q}{\beta d_1}, q_2^{\prime}\leq \frac{Q}{\beta d_2}\\(q_1^{\prime},\beta d_1)=1, (q_2^{\prime},\beta d_2)=1\\(q_1^{\prime}, q_2^{\prime})=1}}\mu(q_1^{\prime})^2\mu(q_2^{\prime})^2\sum_{r\in\mathbb{Z}}H\left(\frac{\mathscr{N}_{Q}r}{q_1^{\prime}q_2^{\prime}\beta} \right).\numberthis\label{sum4}
 \end{align*}
For non-zero contribution from $H$, using the fact that Supp$H \subset (0,\Lambda)$ and \eqref{N(Q)}, one must have
\[0<\frac{\mathscr{N}_{Q}r}{q_1^{\prime}q_2^{\prime}\beta}<\Lambda,\numberthis\label{nonzero contribution}\] which implies 
\[\beta d_1d_2r<\frac{\Lambda\pi^2}{3\delta(p)}=\mathcal{C}_{\Lambda}.\]
By applying above estimate and observing that
\[H\left(\frac{\mathscr{N}_{Q}r}{q_1^{\prime}q_2^{\prime}\beta} \right)=H\left(\frac{3Q^2\delta(p)r}{\pi^2q_1^{\prime}q_2^{\prime}\beta} \right)+\BigO{\frac{r}{q_1^{\prime}q_2^{\prime}\beta}Q^{\frac{3}{2}}},\]
the sum in \eqref{sum4} can be expressed as
\small\[S=\sum_{\substack{d_1, d_2,\beta, r\geq 1\\ \beta d_1d_2r<\mathcal{C}_{\Lambda}\\(\beta, d_1d_2)=1}}\beta\mu(d_1)\mu(d_2)\mu(\beta)^2 \sum_{\substack{q_1^{\prime}\leq \frac{Q}{\beta d_1}, q_2^{\prime}\leq \frac{Q}{\beta d_2}\\(q_1^{\prime},\beta d_1)=1, (q_2^{\prime},\beta d_2)=1\\(q_1^{\prime}, q_2^{\prime})=1}}\mu(q_1^{\prime})^2\mu(q_2^{\prime})^2H\left(\frac{3Q^2\delta(p)r}{\pi^2q_1^{\prime}q_2^{\prime}\beta} \right)+\BigOC{Q^{\frac{3}{2}}\log^2 Q}.\numberthis\label{sum5}\]

To estimate the inner sum in \eqref{sum5}, we use Lemma \ref{key lemma}, which counts the lattice points with some coprimality conditions and square-free restrictions in a bounded region.
Note that, since Supp $H\subset(0,\Lambda)$, then for non-zero contribution from $H$, one has $0<\frac{3Q^2\delta(p)r}{\pi^2x_1x_2\beta}<\Lambda$. For $0<x_1\leq \frac{Q}{\beta d_1}$ and $0<x_2\leq \frac{Q}{\beta d_2}$, we obtain
\[\frac{1}{x_1}\leq \frac{\mathcal{C}_{\Lambda}}{rd_2Q}\ \text{and}\ \frac{1}{x_2}\leq \frac{\mathcal{C}_{\Lambda}}{rd_1Q}. \numberthis\label{x_1}\]
Using \eqref{x_1} and the necessary condition for the non-zero contribution of $H$,  we get
\[\left|\frac{\partial H}{\partial x_1}(x_1,x_2)\right|\ll\frac{1}{Q}\ \text{and}\ \left|\frac{\partial H}{\partial x_2}(x_1,x_2)\right|\ll\frac{1}{Q}.\]
Hence
\[\|DH\|_{\infty}\ll\frac{1}{Q}.\]

We apply Lemma \ref{key lemma} with $r_1=\beta d_1,\ r_2=\beta d_2$, and $f(a,b)=H\left(\frac{3r\delta(p)Q^2}{\beta\pi^2ab} \right)$, to obtain
\begin{align*}
    \sum_{\substack{q_1^{\prime}\leq \frac{Q}{\beta d_1}, q_2^{\prime}\leq \frac{Q}{\beta d_2}\\(q_1^{\prime},\beta d_1)=1, (q_2^{\prime},\beta d_2)=1\\(q_1^{\prime}, q_2^{\prime})=1}}\mu(q_1^{\prime})^2\mu(q_2^{\prime})^2H\left(\frac{3r\delta(p)Q^2}{\beta\pi^2q_1^{\prime}q_2^{\prime}} \right)=&\frac{6P_{\beta,d_1,d_2}}{\pi^2}\int_{0}^{\frac{Q}{\beta d_1}}\int_{0}^{\frac{Q}{\beta d_2}}H\left(\frac{3r\delta(p)Q^2}{\beta\pi^2xy}\right)dxdy\\&+\BigO{(\tau(\beta d_1)+\tau(\beta d_2))Q^{\frac{3}{2}}\log^2Q},\numberthis\label{sum16}
\end{align*}
where
\[P_{\beta,d_1,d_2}=\frac{\phi(\beta d_1)\phi(\beta d_2)}{\beta^2d_1d_2}\prod_{p|\gcd(\beta d_1, \beta d_2)}\left(1-\frac{1}{p^2}\right)^{-1}\prod_{\substack{p \\(p,\beta d_1d_2)=1}}\left(1-\frac{1}{p(p+1)}\right)\left(1-\frac{1}{p^2+p-1}\right).\]
The main term in \eqref{sum16}, by a suitable change of variable, can be expressed as 
\[\frac{6P_{\beta,d_1,d_2}Q^2}{\pi^2}\int_{0}^{\frac{1}{\beta d_1}}\int_{0}^{\frac{1}{\beta d_2}}H\left(\frac{3r\delta(p)}{\beta\pi^2xy}\right)dxdy.\]
Returning to the sum in \eqref{sum5}, we get
\[S=\frac{6Q^2}{\pi^2}\sum_{\substack{d_1, d_2,\beta, r\geq 1\\ \beta d_1d_2r<\mathcal{C}_{\Lambda}\\(\beta, d_1d_2)=1}}\beta\mu(d_1)\mu(d_2)\mu(\beta)^2P_{\beta,d_1,d_2}\int_{0}^{\frac{1}{\beta d_1}}\int_{0}^{\frac{1}{\beta d_2}}H\left(\frac{3r\delta(p)}{\beta\pi^2xy}\right)dxdy+\BigOC{Q^{\frac{3}{2}}\log^2Q}. \numberthis\label{sum17}\]
Since Supp $H\subset (0,\Lambda)$, we put $\lambda=\frac{3r\delta(p)}{\beta\pi^2xy}$ then the double integral in the above sum becomes
\begin{align*}
\mathfrak{I}_H:&=\int_{0}^{\frac{1}{\beta d_1}}\int_{0}^{\frac{1}{\beta d_2}}H\left(\frac{3r\delta(p)}{\beta\pi^2xy}\right)dxdy\\
&=\frac{3r\delta(p)}{\beta\pi^2}\int_{0}^{\frac{1}{\beta d_1}}\int_{\frac{3rd_2\delta(p)}{\pi^2 x}}^{\Lambda}\frac{H(\lambda)}{\lambda^2x}d\lambda dx\\
&=\frac{3r\delta(p)}{\beta\pi^2}\int_{\frac{3r\beta d_1d_2\delta(p)}{\pi^2 }}^{\Lambda}\int_{\frac{3rd_2\delta(p)}{\pi^2 \lambda}}^{\frac{1}{\beta d_1}}\frac{H(\lambda)}{\lambda^2x}dx d\lambda \\
    &=\frac{3r\delta(p)}{\beta\pi^2}\int_{\frac{3r\beta d_1d_2\delta(p)}{\pi^2 }}^{\Lambda}\frac{H(\lambda)}{\lambda^2}\log\left({\frac{\pi^2\lambda}{3r\beta d_1d_2\delta(p)}}\right)d\lambda.
\end{align*}
Inserting $\mathfrak{I}_H$ in \eqref{sum17}, we have
\small\begin{align*}
    S=&\frac{18Q^2\delta(p)}{\pi^4}\sum_{\substack{d_1, d_2,\beta, r\geq 1\\ \beta d_1d_2r<\mathcal{C}_{\Lambda}\\(\beta, d_1d_2)=1}}r\mu(d_1)\mu(d_2)\mu(\beta)^2P_{\beta,d_1,d_2}\int_{\frac{3r\beta d_1d_2\delta(p)}{\pi^2 }}^{\Lambda}\frac{H(\lambda)}{\lambda^2}\log\left({\frac{\pi^2\lambda}{3r\beta d_1d_2\delta(p)}}\right)d\lambda\\&+\BigOC{Q^{\frac{3}{2}}\log^2Q}\\
    =&\frac{18Q^2\delta(p)}{\pi^4}\sum_{1\leq m<\mathcal{C}_{\Lambda}}\int_{\frac{3m\delta(p)}{\pi^2 }}^{\Lambda}\frac{H(\lambda)}{\lambda^2}\log\left({\frac{\pi^2\lambda}{3m\delta(p)}}\right)d\lambda\sum_{\substack{\beta d_1d_2r=m\\(\beta, d_1d_2)=1}}r\mu(d_1)\mu(d_2)\mu(\beta)^2P_{\beta,d_1,d_2}\\&+\BigOC{Q^{\frac{3}{2}}\log^2Q}.\numberthis\label{sum18}\\
\end{align*}
Now, using the fact that $(\beta,d_1d_2)=1$, product term $P_{\beta,d_1,d_2}$ in the inner sum in \eqref{sum18} can be expressed as
\[P_{\beta,d_1,d_2}=P_{\beta}P_{d_1,d_2}, \]
where
\[P_{\beta}=\prod_{p|\beta}\left(\frac{p(p-1)}{p^2+p-2}\right), \]
and
\[P_{d_1,d_2}=\frac{\phi(d_1)\phi(d_2)}{d_1d_2}\prod_{p|\gcd(d_1,d_2)}\left(1-\frac{1}{p^2}\right)^{-1}\prod_{\substack{p\\(p,d_1d_2)=1}}\left(1-\frac{1}{p(p+1)}\right)\left(1-\frac{1}{p^2+p-1}\right),\]
which can be further expressed as 
\[P_{d_1,d_2}=P_{d_1,d_2}^{1}P_{d_1,d_2}^2,\]
where
\[P_{d_1,d_2}^1=\prod_{p|d_1}\left(1-\frac{1}{p}\right)\prod_{\substack{p\\(p,d_1)=1}}\left(1-\frac{1}{p(p+1)}\right)\left(1-\frac{1}{p^2+p-1}\right),\]
and
\[P_{d_1,d_2}^2=\prod_{p|d_2}\left(1-\frac{1}{p}\right)\prod_{p|\gcd(d_1,d_2)}\left(1-\frac{1}{p^2}\right)^{-1}\prod_{\substack{p|d_2\\(p,d_1)=1}}\left(1-\frac{1}{p(p+1)}\right)\left(1-\frac{1}{p^2+p-1}\right).\]
To estimate the inner sum in the main term of \eqref{sum18}, we write
\begin{align*}
    F(m):&=\sum_{\substack{\beta d_1d_2r=m\\(\beta, d_1d_2)=1}}r\mu(d_1)\mu(d_2)\mu(\beta)^2P_{\beta}P_{d_1,d_2}\\
    &=m\sum_{d_1|m}\frac{\mu(d_1)}{d_1}\sum_{d_2|\frac{m}{d_1}}\frac{\mu(d_2)}{d_2}P_{d_1,d_2}\sum_{\substack{\beta|\frac{m}{d_1d_2}\\(\beta, d_1d_2)=1}}\frac{\mu(\beta)^2}{\beta}P_{\beta}.\numberthis\label{FP(m)}
\end{align*}
We denote the innermost sum in \eqref{FP(m)} by $F_{(m,d_1,d_2)}$ and one can observe that it is multiplicative, so we evaluate it on the prime powers 
\begin{align*}
    F_{(m,d_1,d_2)}:&=\sum_{\substack{\beta|\frac{m}{d_1d_2}\\(\beta, d_1d_2)=1}}\frac{\mu(\beta)^2}{\beta}P_{\beta}
    =\sum_{\substack{\beta|\frac{m}{d_1d_2}\\(\beta, d_1d_2)=1}}\frac{\mu(\beta)^2}{\beta}\prod_{p|\beta}\left(\frac{p(p-1)}{p^2+p-2}\right)\\
    &=\prod_{\substack{p|\frac{m}{d_1d_2}\\(p,d_1d_2)=1}}\left(1+\frac{p-1}{p^2+p-2}\right).\numberthis\label{product}
\end{align*}
So, \eqref{product} in conjunction with \eqref{FP(m)} gives
\begin{align*}
    F(m)&=m\sum_{d_1|m}\frac{\mu(d_1)}{d_1}\sum_{d_2|\frac{m}{d_1}}\frac{\mu(d_2)}{d_2}P_{d_1,d_2}\prod_{\substack{p|\frac{m}{d_1d_2}\\(p,d_1d_2)=1}}\left(1+\frac{p-1}{p^2+p-2}\right)\\
    &=m\sum_{d_1|m}\frac{\mu(d_1)}{d_1}P_{d_1,d_2}^1\sum_{d_2|\frac{m}{d_1}}\frac{\mu(d_2)}{d_2}\prod_{\substack{p|\frac{m}{d_1d_2}\\(p,d_1d_2)=1}}\left(1+\frac{p-1}{p^2+p-2}\right)P_{d_1,d_2}^2.\numberthis\label{F1(m)}
\end{align*}
Similarly, the inner sum in \eqref{F1(m)} is determined at prime powers, we write
\begin{align*}
    F_{(m,d_1)}:=&\sum_{d_2|\frac{m}{d_1}}\frac{\mu(d_2)}{d_2}\prod_{\substack{p|\frac{m}{d_1d_2}\\(p,d_1d_2)=1}}\left(1+\frac{p-1}{p^2+p-2}\right)P_{d_1,d_2}^2\\
    =&\prod_{\substack{p|\frac{m}{d_1}\\(p,d_1)=1}}\left(1-\frac{(p-1)(p^2+p-2)}{p^3(p+1)}\prod_{\substack{p^{\prime}|\frac{m}{d_1p}\\(p^{\prime},d_1p)=1}}\left(1+\frac{p^{\prime}-1}{p{^{\prime}}^2+p^{\prime}-2}\right)\right)\\&\times\prod_{\substack{p|\frac{m}{d_1}\\p|d_1}}\left(1-\frac{1}{p+1}\prod_{\substack{p^{\prime}|\frac{m}{d_1p}\\(p^{\prime},d_1p)=1}}\left(1+\frac{p^{\prime}-1}{p{^{\prime}}^2+p^{\prime}-2}\right)\right).\numberthis\label{F2(m)}
\end{align*}
So, \eqref{F2(m)} together with \eqref{F1(m)} gives the required $F(m)$ as defined in \eqref{F(m)}. Inserting $F(m)$ in \eqref{sum18}, we obtain
\begin{align*}
    S&=\frac{18Q^2\delta(p)}{\pi^4}\sum_{1\leq m<\mathcal{C}_{\Lambda}}F(m)\int_{\frac{3m\delta(p)}{\pi^2 }}^{\Lambda}\frac{H(\lambda)}{\lambda^2}\log\left({\frac{\pi^2\lambda}{3m\delta(p)}}\right)d\lambda+\BigOC{Q^{\frac{3}{2}}\log^2Q}\\
    &=\frac{18Q^2\delta(p)}{\pi^4}\int_{0}^{\Lambda}\frac{H(\lambda)}{\lambda^2}\sum_{1\leq m<\mathcal{C}_{\Lambda}}F(m)\max\left(0,\log\left({\frac{\pi^2\lambda}{3m\delta(p)}}\right)\right)d\lambda+\BigOC{Q^{\frac{3}{2}}\log^2Q}\\
    &=\frac{3Q^2\delta(p)}{\pi^2}\int_{0}^{\Lambda}H(\lambda)\mathfrak{g}_2(\lambda)d\lambda+\BigOC{Q^{\frac{3}{2}}\log^2Q},
\end{align*}
where the function $\mathfrak{g}_2(\lambda)$ is defined in Theorem \ref{main result}.
\[\frac{S}{\mathscr{N}_{Q}}=\int_{0}^{\Lambda}H(\lambda)\mathfrak{g}_2(\lambda)d\lambda+\BigOC{\frac{\log^2Q}{Q^\frac{1}{2}}}.\]
Now we approximate $H$ by the characteristic function of $(0,\Lambda)$, using the standard approximation argument, we get
\[\lim_{Q\to\infty}S(\Lambda)=\int_{0}^{\Lambda}\mathfrak{g}_2(\lambda)d\lambda. \]
Hence the limiting pair correlation function of $\mathcal{F}_{Q,2}$ is $\mathfrak{g}_2(\lambda)$.

\section{Acknowledgement}
The first author acknowledges the support from the University Grants Commission, Department of Higher Education, Government of India, under NTA Ref. no. 191620135578. The research of the second author was supported by the Science and Engineering Research Board, Department of Science and Technology, Government of India, under grant SB/S2/RJN-053/2018. We are grateful to the referee for many valuable suggestions in an earlier version of the paper.

\bibliographystyle{amsalpha} 
\bibliography{reference}
\end{document}